\documentclass[11pt,a4paper]{article}
\usepackage[T1]{fontenc}
\usepackage{lmodern}
\usepackage[margin=1in]{geometry}
\usepackage{amsmath,amssymb,amsthm}
\usepackage{microtype}
\usepackage{enumitem}
\usepackage{cite}
\usepackage[hidelinks]{hyperref}
\hypersetup{pdftitle={Understanding SPIDER: Note II on Recursive Variance Reduction}}
\newtheorem{assumption}{Assumption}
\newtheorem{lemma}{Lemma}
\newtheorem{theorem}{Theorem}
\newcommand{\E}{\mathbb{E}}
\newcommand{\R}{\mathbb{R}}
\newcommand{\norm}[1]{\left\|#1\right\|}
\newcommand{\ip}[2]{\left\langle #1,#2\right\rangle}
\newcommand{\eps}{\varepsilon}
\setlist[itemize]{leftmargin=1.4em,itemsep=2pt,topsep=4pt,parsep=0pt}
\title{Variance Reduction Note 2: The Convergence of SPIDER}
\author{Alina Lodhi\\
xieyunqi27@gmail.com}
\date{}

\begin{document}
\maketitle
\vspace{-2em}
\begin{abstract}
The stochastic path-integrated differential estimator (SPIDER) tracks a
gradient by accumulating small, shared-sample gradient differences and
periodically refreshing the estimate. This second expository note on
recursive variance reduction explains SPIDER-SFO and gives a self-contained
proof of its expected first-order stationarity guarantee. The argument
combines an epochwise mean-square error bound with descent under a clipped,
normalized update. It yields $O(\eps^{-3})$ stochastic gradient evaluations
in the online setting and $O(n+\sqrt n\,\eps^{-2})$ component gradient
evaluations for a finite sum, with the remaining problem parameters fixed.
All refresh costs are included. The presentation is pedagogical and makes
no claim of a new algorithm or convergence rate.
\end{abstract}

\section{From recursive estimates to periodic refreshes}
Consider a smooth, possibly nonconvex objective
\begin{equation}\label{eq:problem}
 \min_{x\in\R^d}F(x):=\E_\xi[f(x;\xi)],
 \qquad g(x;\xi):=\nabla f(x;\xi).
\end{equation}
Variance reduction exploits relationships among stochastic gradients rather
than estimating each gradient independently. Classical examples include
SVRG and SAGA \cite{svrg2013,saga2014}; SARAH introduced a recursive
gradient estimator \cite{sarah2017}. SPIDER, developed by Fang et al.\
\cite{spider2018}, combines recursive differences with periodic refreshes
and controlled iterate movement. Its name emphasizes a discrete path
identity: for any epoch starting at $s$,
\[
 \nabla F(x_t)=\nabla F(x_s)+
 \sum_{k=s+1}^t[\nabla F(x_k)-\nabla F(x_{k-1})].
\]
SPIDER estimates the initial term accurately and each subsequent difference
cheaply. Using the same sample at both points makes the difference noise
small when the points are close.

We study the expectation version of SPIDER-SFO \cite{spider2018}, with
its free parameter $n_0=1$ and integer batch sizes. The proof uses explicit,
convenient constants. This complements the momentum-based mechanism of
STORM \cite{storm2019}: SPIDER controls accumulated error through
refreshes, whereas STORM damps old error within its recursion. For broader
context, see \cite{gower2020}.

\section{The oracle and the SPIDER-SFO update}
\begin{assumption}\label{ass:oracle}
The differentiable function $F$ is bounded below by $F_\star> -\infty$.
For some $L>0$ and every $x,y\in\R^d$,
\begin{equation}\label{eq:oracle}
 \E[g(x;\xi)]=\nabla F(x),\qquad
 \E\norm{g(x;\xi)-g(y;\xi)}^2\leq L^2\norm{x-y}^2.
\end{equation}
In the online setting, also assume that, for some $\sigma\geq0$,
\begin{equation}\label{eq:variance}
 \E\norm{g(x;\xi)-\nabla F(x)}^2\leq\sigma^2.
\end{equation}
All new samples are independent of the past and of one another. A sample
can be reused to evaluate its gradient at two specified points. Norms are
Euclidean.
\end{assumption}
The mean-square smoothness condition \eqref{eq:oracle} implies that $F$
is $L$-smooth by Jensen's inequality. For a finite sum
$F(x)=n^{-1}\sum_{i=1}^n f_i(x)$, take $\xi$ uniform on
$\{1,\ldots,n\}$ and $g(x;i)=\nabla f_i(x)$. The same mean-square
condition is imposed, but \eqref{eq:variance} is unnecessary because
refreshes will use the exact full gradient.

Fix $x_0$, an internal tolerance $\delta>0$, a horizon $T\geq1$, an
epoch length $q\geq1$, and batch sizes $B,b\geq1$. For
$t=0,\ldots,T-1$, first construct $v_t$ as follows:
\begin{equation}\label{eq:estimator}
 v_t=\begin{cases}
 \displaystyle\frac1B\sum_{i=1}^B g(x_t;\zeta_{t,i}),
       &t\equiv0\pmod q,\\[1.2ex]
 \displaystyle v_{t-1}+\frac1b\sum_{i=1}^b
       [g(x_t;\xi_{t,i})-g(x_{t-1};\xi_{t,i})],
       &t\not\equiv0\pmod q.
 \end{cases}
\end{equation}
In the finite-sum setting, replace the first line by
$v_t=\nabla F(x_t)$, at a cost of $n$ component gradients; sample inner
indices independently with replacement. Both evaluations in each inner
difference use the same sample. Then update
\begin{equation}\label{eq:update}
 \alpha_t=\frac{\delta}{L\max\{\norm{v_t},2\delta\}},
 \qquad x_{t+1}=x_t-\alpha_t v_t.
\end{equation}
Return $x_R$, where $R$ is independent of the run and uniform on
$\{0,\ldots,T-1\}$. This is the clipped normalized step in SPIDER-SFO:
\begin{equation}\label{eq:step-properties}
 0<\alpha_t\leq\frac1{2L},\qquad
 \norm{x_{t+1}-x_t}\leq\frac\delta L.
\end{equation}
The maximum in \eqref{eq:update} also defines the update when $v_t=0$.
The estimator is generally conditionally biased between refreshes; the
analysis below tracks its error explicitly.

\section{A short convergence proof}
Let $e_t=v_t-\nabla F(x_t)$ and $Q_t=\E\norm{e_t}^2$. Write
$s=q\lfloor t/q\rfloor$ for the most recent refresh index. At every
refresh, $Q_s\leq\nu^2$, where $\nu^2=\sigma^2/B$ online and
$\nu^2=0$ for an exact finite-sum refresh.

\begin{lemma}[Error accumulation within an epoch]\label{lem:error}
Under Assumption~\ref{ass:oracle}, the recursion satisfies
\begin{equation}\label{eq:error}
 Q_t\leq\nu^2+\frac{L^2}{b}
       \sum_{k=s+1}^t\E\norm{x_k-x_{k-1}}^2
 \leq\nu^2+\frac{q\delta^2}{b}.
\end{equation}
The sum is zero when $t=s$.
\end{lemma}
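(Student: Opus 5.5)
The plan is to derive a one-step recursion for $e_t$ inside an epoch and unroll it back to the refresh index $s$. For $s<k\leq t$, subtracting $\nabla F(x_k)$ from the inner line of \eqref{eq:estimator} gives
\[
 e_k=e_{k-1}+\frac1b\sum_{i=1}^b\Delta_{k,i},\qquad
 \Delta_{k,i}:=g(x_k;\xi_{k,i})-g(x_{k-1};\xi_{k,i})-[\nabla F(x_k)-\nabla F(x_{k-1})].
\]
Let $\mathcal F_k$ be the $\sigma$-algebra generated by all samples drawn before iteration $k$. Both $x_k$ and $x_{k-1}$ are $\mathcal F_k$-measurable: $x_k=x_{k-1}-\alpha_{k-1}v_{k-1}$ depends only on samples up to step $k-1$. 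The same holds for $e_{k-1}$. The fresh samples $\xi_{k,i}$ are independent of $\mathcal F_k$, so by the unbiasedness in \eqref{eq:oracle} we get $\E[\Delta_{k,i}\mid\mathcal F_k]=0$.

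The key step is expanding the square and killing the cross terms. Taking conditional expectations gives
\[
 \E[\norm{e_k}^2\mid\mathcal F_k]=\norm{e_{k-1}}^2+\E\Bigl[\Bigl\|\frac1b\sum_i\Delta_{k,i}\Bigr\|^2\Bigm|\mathcal F_k\Bigr].
\]
The $\Delta_{k,i}$ are conditionally independent and mean zero across $i$, so the last term equals $b^{-2}\sum_i\E[\norm{\Delta_{k,i}}^2\mid\mathcal F_k]$. We then use the fact that a variance is at most the second moment, $\E\norm{Z-\E Z}^2\leq\E\norm Z^2$, together with \eqref{eq:oracle}. Here \eqref{eq:oracle} is applied at the fixed points $x_k,x_{k-1}$ given $\mathcal F_k$, which is legitimate because the new sample is independent of them. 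This yields a bound of $L^2\norm{x_k-x_{k-1}}^2/b$. Taking total expectations gives $Q_k\leq Q_{k-1}+\frac{L^2}{b}\E\norm{x_k-x_{k-1}}^2$.

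Next we telescope from $k=s+1$ to $t$ and use $Q_s\leq\nu^2$ to obtain the first inequality in \eqref{eq:error}. The bound $Q_s\leq\nu^2$ holds online because the $B$ refresh samples are i.i.d., so by \eqref{eq:variance} the averaged error has second moment at most $\sigma^2/B$. In the finite-sum case $e_s=0$. For the second inequality, \eqref{eq:step-properties} gives $\norm{x_k-x_{k-1}}^2\leq\delta^2/L^2$. There are $t-s\leq q-1\leq q$ terms, so the sum is at most $q\delta^2/b$. When $t=s$ the sum is empty and the bound is just $Q_s\leq\nu^2$.

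The main obstacle is the measurability and independence bookkeeping that justifies applying \eqref{eq:oracle} conditionally at random points. Stated carefully, we condition on $\mathcal F_k$, treat $x_k,x_{k-1}$ as fixed, and use the independence of the fresh samples to integrate. Without this step, the statement of \eqref{eq:oracle} for deterministic $x,y$ would not directly apply.
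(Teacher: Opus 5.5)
Your proposal is correct and follows the paper's proof essentially step for step. Both arguments use the same steps:
\begin{itemize}
\item the conditionally centered increment after conditioning on the pre-batch history;
\item vanishing cross terms;
\item the bound $\frac1b\E\norm{g(x_k;\xi)-g(x_{k-1};\xi)}^2\leq\frac{L^2}{b}\norm{x_k-x_{k-1}}^2$ from within-batch independence and \eqref{eq:oracle};
\item telescoping from the refresh bound $Q_s\leq\nu^2$;
\item the step cap $\norm{x_k-x_{k-1}}\leq\delta/L$.
\end{itemize}
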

\begin{proof}
For a nonrefresh step $k$, let $\mathcal H_k$ contain all randomness
before drawing its new batch, so $x_k,x_{k-1},v_{k-1}$ are measurable.
Define the centered increment
\[
 Z_k=\frac1b\sum_{i=1}^b
 [g(x_k;\xi_{k,i})-g(x_{k-1};\xi_{k,i})]
 -[\nabla F(x_k)-\nabla F(x_{k-1})].
\]
Then $e_k=e_{k-1}+Z_k$ and $\E[Z_k\mid\mathcal H_k]=0$.
Independence within the batch and the variance identity give
\[
 \E[\norm{Z_k}^2\mid\mathcal H_k]
 \leq\frac1b\E[\norm{g(x_k;\xi)-g(x_{k-1};\xi)}^2
                   \mid\mathcal H_k]
 \leq\frac{L^2}{b}\norm{x_k-x_{k-1}}^2.
\]
The cross term with $e_{k-1}$ vanishes conditionally, hence
$Q_k\leq Q_{k-1}+(L^2/b)\E\norm{x_k-x_{k-1}}^2$.
Summing from $s+1$ to $t$ and using \eqref{eq:step-properties} proves
\eqref{eq:error}. At a sampled refresh, the independent, centered errors
average to variance at most $\sigma^2/B$.
\end{proof}

\begin{lemma}[Descent with a controlled estimator]\label{lem:descent}
If $Q_t\leq\delta^2$, then
\begin{equation}\label{eq:descent}
 \E F(x_{t+1})\leq\E F(x_t)
 -\frac\delta{4L}\E\norm{v_t}+\frac{\delta^2}{2L}.
\end{equation}
\end{lemma}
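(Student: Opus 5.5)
The plan is to start from the $L$-smoothness of $F$, which the paper notes follows from \eqref{eq:oracle}, and apply it along the step $x_{t+1}=x_t-\alpha_t v_t$:
\[
 F(x_{t+1})\leq F(x_t)-\alpha_t\ip{\nabla F(x_t)}{v_t}+\frac{L\alpha_t^2}{2}\norm{v_t}^2 .
\]
I then substitute $\nabla F(x_t)=v_t-e_t$. This splits the linear term into $-\alpha_t\norm{v_t}^2+\alpha_t\ip{e_t}{v_t}$. For the cross term I would use Young's inequality in the form $\alpha_t\ip{e_t}{v_t}\leq\frac{\alpha_t}{2}\norm{v_t}^2+\frac{\alpha_t}{2}\norm{e_t}^2$. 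The form matters: Cauchy--Schwarz followed by Jensen would leave a $(\delta/L)\E\norm{e_t}\le\delta^2/L$ term, which is already too large for the target constant $\delta^2/(2L)$.

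For the quadratic term I use $\alpha_t\leq 1/(2L)$ from \eqref{eq:step-properties}, which gives $\frac{L\alpha_t^2}{2}\norm{v_t}^2\leq\frac{\alpha_t}{4}\norm{v_t}^2$. Collecting terms yields the pointwise bound
\[
 F(x_{t+1})\leq F(x_t)-\frac{\alpha_t}{4}\norm{v_t}^2+\frac{\alpha_t}{2}\norm{e_t}^2
 \leq F(x_t)-\frac{\alpha_t}{4}\norm{v_t}^2+\frac{1}{4L}\norm{e_t}^2 .
\]
The error term no longer depends on $\alpha_t$. Taking expectations and using $Q_t\leq\delta^2$ therefore bounds its contribution by $\delta^2/(4L)$.

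The remaining step, which I expect to be the only delicate one, is converting $\frac{\alpha_t}{4}\norm{v_t}^2$ into the linear quantity $\frac{\delta}{4L}\norm{v_t}$ despite the clipping. I split into the two cases of the maximum in \eqref{eq:update}.

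\begin{itemize}
\item If $\norm{v_t}\geq2\delta$, then $\alpha_t=\delta/(L\norm{v_t})$. In this case $\frac{\alpha_t}{4}\norm{v_t}^2=\frac{\delta}{4L}\norm{v_t}$ exactly.
\item If $\norm{v_t}<2\delta$, then $\alpha_t=1/(2L)$. The term becomes $-\frac{1}{8L}\norm{v_t}^2$. I write it as $-\frac{\delta}{4L}\norm{v_t}+\frac1L\bigl(\frac\delta4 u-\frac18u^2\bigr)$ with $u=\norm{v_t}$. The bracket is maximized at $u=\delta$ with value $\delta^2/8$.
\end{itemize}

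In both cases, pointwise,
\[
 -\frac{\alpha_t}{4}\norm{v_t}^2\leq-\frac{\delta}{4L}\norm{v_t}+\frac{\delta^2}{8L}.
\]

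Finally, I take total expectations and combine the two additive errors: $\frac{\delta^2}{4L}+\frac{\delta^2}{8L}=\frac{3\delta^2}{8L}\leq\frac{\delta^2}{2L}$. This gives \eqref{eq:descent}. The case $v_t=0$ is covered by the second case, since the maximum keeps $\alpha_t$ well defined.
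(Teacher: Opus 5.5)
Your proof is correct and follows the paper's argument step for step. You use the same smoothness expansion with $\nabla F(x_t)=v_t-e_t$, the same Young split, and the same bound $\alpha_t\le 1/(2L)$ to reach the pointwise inequality $F(x_{t+1})\le F(x_t)-\frac{\alpha_t}{4}\|v_t\|^2+\frac{1}{4L}\|e_t\|^2$. Your two-case treatment of the clipping matches the paper's case proof of the scalar inequality $r^2/\max\{r,2\delta\}\ge r-\delta$. The only difference is that yours is the sharp version, $r-\delta/2$, which gives slack $\delta^2/(8L)$ instead of the paper's $\delta^2/(4L)$; both fit within the stated $\delta^2/(2L)$.
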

\begin{proof}
Since $\nabla F(x_t)=v_t-e_t$, smoothness and
$\ip{e_t}{v_t}\leq(\norm{e_t}^2+\norm{v_t}^2)/2$ yield
\begin{align*}
 F(x_{t+1})
 &\leq F(x_t)-\alpha_t\norm{v_t}^2
       +\alpha_t\ip{e_t}{v_t}
       +\frac{L\alpha_t^2}{2}\norm{v_t}^2\\
 &\leq F(x_t)-\frac{\alpha_t}{4}\norm{v_t}^2
       +\frac1{4L}\norm{e_t}^2.
\end{align*}
Here \eqref{eq:step-properties} bounds the coefficients pointwise;
$\alpha_t$ need not be independent of $e_t$. For every $r\geq0$,
$r^2/\max\{r,2\delta\}\geq r-\delta$.
For $r\leq2\delta$, this follows from
$r^2-2\delta r+2\delta^2=(r-\delta)^2+\delta^2\geq0$;
for $r\geq2\delta$, it is immediate. Thus
$\alpha_t\norm{v_t}^2\geq(\delta/L)(\norm{v_t}-\delta)$.
Taking expectations and using $Q_t\leq\delta^2$ proves the result.
\end{proof}

\begin{theorem}[Expected stationarity]\label{thm:main}
Let $\Delta=F(x_0)-F_\star$. If the refresh and batch choices ensure
$\nu^2+q\delta^2/b\leq\delta^2$, then SPIDER-SFO satisfies
\begin{equation}\label{eq:rate}
 \E\norm{\nabla F(x_R)}\leq\frac{4L\Delta}{\delta T}+3\delta.
\end{equation}
Consequently, for any target $\eps>0$, choose
\begin{equation}\label{eq:target}
 \delta=\frac\eps4,\qquad
 T=\max\left\{1,\left\lceil\frac{4L\Delta}{\delta^2}\right\rceil\right\}.
\end{equation}
Then $\E\norm{\nabla F(x_R)}\leq\eps$.
\end{theorem}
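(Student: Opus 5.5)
The plan is to chain Lemma~\ref{lem:error} into Lemma~\ref{lem:descent}, telescope over the horizon, and then pass from the estimator $v_t$ to the true gradient.

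First, Lemma~\ref{lem:error} and the hypothesis $\nu^2+q\delta^2/b\leq\delta^2$ give $Q_t\leq\delta^2$ for every $t$. This holds at refreshes as well, since there the sum is empty and $Q_s\leq\nu^2$. Hence Lemma~\ref{lem:descent} applies at every step $t=0,\ldots,T-1$. Summing \eqref{eq:descent} over these steps and using $\E F(x_T)\geq F_\star$ gives
\[
 \frac{\delta}{4L}\sum_{t=0}^{T-1}\E\norm{v_t}\leq\Delta+\frac{T\delta^2}{2L},
\]
which rearranges to
\[
 \frac1T\sum_{t=0}^{T-1}\E\norm{v_t}\leq\frac{4L\Delta}{\delta T}+2\delta .
\]

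Next, pass to the true gradient. The triangle inequality gives $\norm{\nabla F(x_t)}\leq\norm{v_t}+\norm{e_t}$, and Jensen gives $\E\norm{e_t}\leq\sqrt{Q_t}\leq\delta$. Since $R$ is uniform on $\{0,\ldots,T-1\}$ and independent of the run,
\[
 \E\norm{\nabla F(x_R)}=\frac1T\sum_t\E\norm{\nabla F(x_t)}.
\]
This is at most the average of $\E\norm{v_t}$ plus $\delta$, which yields \eqref{eq:rate} with the constant $3\delta$.

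For the consequence, set $\delta=\eps/4$. Then $T\geq4L\Delta/\delta^2$ gives $4L\Delta/(\delta T)\leq\delta$, so the right side of \eqref{eq:rate} is at most $4\delta=\eps$. If $\Delta=0$, the term vanishes and $T=1$ suffices, so the maximum with $1$ covers this case.

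The main obstacle is bookkeeping rather than analysis. One must verify that the hypothesis of Lemma~\ref{lem:descent} holds uniformly in $t$, including refresh steps. One must also use expectations of norms, not squared norms, consistently, with Jensen as the bridge. Finally, the independence of $R$ is what justifies the averaging identity.
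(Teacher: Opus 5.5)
Your proposal is correct and follows the paper's proof step for step. Lemma~\ref{lem:error} with the hypothesis gives $Q_t\le\delta^2$ at every $t$, and telescoping Lemma~\ref{lem:descent} bounds the average of $\E\norm{v_t}$ by $4L\Delta/(\delta T)+2\delta$; the triangle inequality with $\E\norm{e_t}\le\sqrt{Q_t}$ (your Jensen step is the paper's Cauchy--Schwarz step), averaged over the independent index $R$, then yields \eqref{eq:rate} and the $\eps$ bound.
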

\begin{proof}
Lemma~\ref{lem:error} gives $Q_t\leq\delta^2$ at every iteration.
Summing Lemma~\ref{lem:descent} and using $F(x_T)\geq F_\star$ gives
\[
 \frac1T\sum_{t=0}^{T-1}\E\norm{v_t}
 \leq\frac{4L\Delta}{\delta T}+2\delta.
\]
By the triangle and Cauchy--Schwarz inequalities,
$\E\norm{\nabla F(x_t)}\leq\E\norm{v_t}+\sqrt{Q_t}$.
Averaging over the independent uniform index $R$ proves
\eqref{eq:rate}; \eqref{eq:target} makes its right-hand side at most
$4\delta=\eps$.
\end{proof}

\paragraph{Online oracle complexity.}
Choose
\begin{equation}\label{eq:online}
 B=\max\{1,\lceil2\sigma^2/\delta^2\rceil\},\qquad
 q=\max\{1,\lceil\sigma/\delta\rceil\},\qquad b=2q.
\end{equation}
Then $\sigma^2/B\leq\delta^2/2$ and $q\delta^2/b=\delta^2/2$,
so Theorem~\ref{thm:main} applies. There are
$m=\lceil T/q\rceil$ refreshes, including the initial one.
Each inner step uses $2b$ gradient evaluations. Therefore the total is
\begin{equation}\label{eq:online-cost}
 N_{\rm on}=mB+2b(T-m)
 \leq B+(B/q+2b)T
 =O\!\left(1+\frac{\sigma^2}{\eps^2}
       +\left(1+\frac\sigma\eps\right)\frac{L\Delta}{\eps^2}\right).
\end{equation}
This is $O(\eps^{-3})$ for fixed $L,\Delta,\sigma$ as $\eps\downarrow0$.
The ceilings also cover $\sigma=0$, when $q=B=1$ and each step uses an
exact gradient almost surely.

\paragraph{Finite-sum oracle complexity.}
Use exact refreshes and take $q=b=\lceil\sqrt n\rceil$.
Now $\nu=0$ and $q\delta^2/b=\delta^2$, so the same theorem applies
without a bounded-variance assumption. The component-gradient count is
\begin{equation}\label{eq:finite-cost}
 N_{\rm fin}=mn+2b(T-m)
 \leq n+(n/q+2b)T
 =O\!\left(n+\frac{\sqrt n\,L\Delta}{\eps^2}\right).
\end{equation}
Indeed, $n/q\leq\sqrt n$ and $b=q\leq2\sqrt n$; the extra
$O(\sqrt n)$ from rounding $T$ is absorbed by $n$.

\paragraph{Interpretation and scope.}
The proof separates three roles: normalization bounds movement, shared
samples control the variance of each difference, and refreshes limit how
many such errors accumulate. Balancing the amortized refresh cost with
the inner batch cost gives the stated oracle bounds. The schedule uses
$L$, the target accuracy, and online a variance bound; choosing $T$ also
uses $\Delta$ or an upper bound on it. The result concerns the expected
gradient norm of a random iterate, not a squared-norm, last-iterate,
high-probability, or second-order guarantee. Constant-step alternatives
such as SpiderBoost \cite{spiderboost2019} require a different descent
argument. STORM and its adaptive variants
\cite{storm2019,stormplus2021,metastorm2022} offer a complementary way
to control recursive estimation error.
\section{Recommended reading}

\paragraph{Variance-reduction methods.}
Basic guarantees \cite{ghadimi2013,gower2020,unifiedsgd2020}; gradient memories and control
variates \cite{sag2012,sdca2013,svrg2013,s2gd2013,saga2014,finito2014,proxsvrg2014}; sampling and acceleration \cite{catalyst2015,katyusha2016,svrgpp2016,katyushax2018,sagasampling2019,jacsketch2018,svrgsd2018,loopless2019,looplesssampling2021}; nonconvex and recursive
estimators \cite{reddi2016,allenzhu2016,natasha2017,sarah2017,isarah2018,scsg2017,spider2018,snvrg2018,spiderboost2019,proxsarah2020,zerosarah2021,page2021,nonconvexsampling2019}; momentum and adaptive schedules \cite{storm2019,hybrid2019,stormplus2021,superadam2021,metastorm2022,adaspider2022,jiang_adaptive2024,adasvrg2021,almosttune2020,adaptiveextra2022,alfpg2026};
second-order extensions \cite{ssrgd2019,natasha22018,svrc2019,svrcnewton2018}; lower bounds \cite{lower2023}.

\paragraph{Compositional optimization.}
Two-level  approximation \cite{scgd2014,asc2017,nasa2020,nearlyeasy2020}; finite-sum and recursive
estimators \cite{lian2017,compavg2018,civr2019,sarahcomp2019}; multilevel tracking \cite{nested2019,jiang_multilevel2022,levelindependent2020,jiang_revisiting2025};
coupled and partially observed blocks \cite{fccotheory2022,jiang_msvr2022,weakcomp2023,jiang_fcco2026,momentumcomp2025}; projection-free
methods \cite{vrfw2016,projectioncomp2022,jiang_projection2024,scgpp2019,jiang_projection2026}; statistical-model applications and federated
extensions \cite{jiang_models2023,fednest2022,jiang_models2026,localcomp2022}.

\paragraph{Sign-based optimization.}
Quantization, signSGD, majority voting, and error feedback \cite{qsgd2017,terngrad2017,signsgd2018,majority2019,errorfeedback2019};
variance-reduced sign updates and Lion \cite{jiang_sign2024,lion2023,jiang_lion2026}; compressed gradient
tracking \cite{diana2019,marina2021,jiang_distributed2026,ef212021,dasha2022}. Related distributed control-variate methods include
SCAFFOLD, FedPAGE, decentralized hybrid estimators, and ProxSkip \cite{scaffold2020,fedpage2021,hybriddecentral2021,proxskip2022}.

\paragraph{Minimax optimization.}
Finite-sum saddle-point methods, matrix games, and variance-reduced
extragradient \cite{saddle2016,matrixgames2019,svre2019,catalystminimax2020}; recursive, hybrid, adaptive, and decentralized
nonconvex minimax methods \cite{sreda2020,hybridminimax2020,adgda2023,dream2024}; a variational-inequality treatment \cite{vi2022}.



\begin{thebibliography}{100}
\small

\bibitem{ghadimi2013}
Saeed Ghadimi and Guanghui Lan.
\href{https://arxiv.org/abs/1309.5549}{Stochastic first- and zeroth-order
methods for nonconvex stochastic programming}.
\emph{SIAM Journal on Optimization}, 23(4):2341--2368, 2013.

\bibitem{gower2020}
Robert M. Gower, Mark Schmidt, Francis Bach, and Peter Richt\'{a}rik.
\href{https://arxiv.org/abs/2010.00892}{Variance-reduced methods for machine
learning}.
\emph{Proceedings of the IEEE}, 108(11):1968--1983, 2020.

\bibitem{unifiedsgd2020}
Eduard Gorbunov, Filip Hanzely, and Peter Richtarik.
\href{https://proceedings.mlr.press/v108/gorbunov20a.html}{A Unified Theory of SGD: Variance Reduction, Sampling, Quantization and Coordinate Descent}.
In \emph{AISTATS}, PMLR 108:680--690, 2020.

\bibitem{sag2012}
Nicolas Le Roux, Mark Schmidt, and Francis Bach.
\href{https://arxiv.org/abs/1202.6258}{A stochastic gradient method with an
exponential convergence rate for finite training sets}.
In \emph{Advances in Neural Information Processing Systems (NeurIPS)},
volume 25, 2012.

\bibitem{sdca2013}
Shai Shalev-Shwartz and Tong Zhang.
\href{https://www.jmlr.org/papers/v14/shalev-shwartz13a.html}{Stochastic Dual Coordinate Ascent Methods for Regularized Loss Minimization}.
\emph{Journal of Machine Learning Research}, 14(16):567--599, 2013.

\bibitem{svrg2013}
Rie Johnson and Tong Zhang.
\href{https://papers.nips.cc/paper_files/paper/2013/hash/ac1dd209cbcc5e5d1c6e28598e8cbbe8-Abstract.html}{Accelerating
stochastic gradient descent using predictive variance reduction}.
In \emph{NeurIPS}, volume 26, 2013.

\bibitem{s2gd2013}
Jakub Konecny and Peter Richtarik.
\href{https://arxiv.org/abs/1312.1666}{Semi-Stochastic Gradient Descent Methods}.
\emph{arXiv preprint arXiv:1312.1666}, 2013.

\bibitem{saga2014}
Aaron Defazio, Francis Bach, and Simon Lacoste-Julien.
\href{https://arxiv.org/abs/1407.0202}{SAGA: A fast incremental gradient
method with support for non-strongly convex composite objectives}.
In \emph{NeurIPS}, volume 27, 2014.

\bibitem{finito2014}
Aaron J. Defazio, Tiberio S. Caetano, and Justin Domke.
\href{https://arxiv.org/abs/1407.2710}{Finito: A Faster, Permutable Incremental Gradient Method for Big Data Problems}.
\emph{arXiv preprint arXiv:1407.2710}, 2014.

\bibitem{proxsvrg2014}
Lin Xiao and Tong Zhang.
\href{https://arxiv.org/abs/1403.4699}{A proximal stochastic gradient
method with progressive variance reduction}.
\emph{SIAM Journal on Optimization}, 24(4):2057--2075, 2014.

\bibitem{catalyst2015}
Hongzhou Lin, Julien Mairal, and Zaid Harchaoui.
\href{https://papers.nips.cc/paper/2015/hash/c164bbc9d6c72a52c599bbb43d8db8e1-Abstract.html}{A Universal Catalyst for First-Order Optimization}.
In \emph{NeurIPS}, volume 28, 2015.

\bibitem{katyusha2016}
Zeyuan Allen-Zhu.
\href{https://arxiv.org/abs/1603.05953}{Katyusha: The first direct
acceleration of stochastic gradient methods}.
\emph{arXiv preprint arXiv:1603.05953}, 2016.

\bibitem{svrgpp2016}
Zeyuan Allen-Zhu and Yang Yuan.
\href{https://proceedings.mlr.press/v48/allen-zhub16.html}{Improved SVRG for Non-Strongly-Convex or Sum-of-Non-Convex Objectives}.
In \emph{ICML}, PMLR 48:1080--1089, 2016.

\bibitem{katyushax2018}
Zeyuan Allen-Zhu.
\href{https://proceedings.mlr.press/v80/allen-zhu18a.html}{Katyusha X: Simple Momentum Method for Stochastic Sum-of-Nonconvex Optimization}.
In \emph{ICML}, PMLR 80:179--185, 2018.

\bibitem{sagasampling2019}
Xun Qian, Zheng Qu, and Peter Richtarik.
\href{https://proceedings.mlr.press/v97/qian19a.html}{SAGA with Arbitrary Sampling}.
In \emph{ICML}, PMLR 97:5190--5199, 2019.

\bibitem{jacsketch2018}
Robert M. Gower, Peter Richtarik, and Francis Bach.
\href{https://arxiv.org/abs/1805.02632}{Stochastic Quasi-Gradient Methods: Variance Reduction via Jacobian Sketching}.
\emph{arXiv preprint arXiv:1805.02632}, 2018.

\bibitem{svrgsd2018}
Fanhua Shang, Yuanyuan Liu, Kaiwen Zhou, James Cheng, Kelvin Kai Wing Ng, and Yuichi Yoshida.
\href{https://proceedings.mlr.press/v84/shang18a.html}{Guaranteed Sufficient Decrease for Stochastic Variance Reduced Gradient Optimization}.
In \emph{AISTATS}, PMLR 84:1027--1036, 2018.

\bibitem{loopless2019}
Dmitry Kovalev, Samuel Horvath, and Peter Richtarik.
\href{https://arxiv.org/abs/1901.08689}{Don't Jump Through Hoops and Remove Those Loops: SVRG and Katyusha are Better Without the Outer Loop}.
\emph{arXiv preprint arXiv:1901.08689}, 2019.

\bibitem{looplesssampling2021}
Xun Qian, Zheng Qu, and Peter Richtarik.
\href{https://jmlr.org/papers/v22/20-156.html}{L-SVRG and L-Katyusha with Arbitrary Sampling}.
\emph{Journal of Machine Learning Research}, 22(112), 2021.

\bibitem{reddi2016}
Sashank J. Reddi, Ahmed Hefny, Suvrit Sra, Barnabas Poczos, and Alex Smola.
\href{https://proceedings.mlr.press/v48/reddi16.html}{Stochastic variance
reduction for nonconvex optimization}.
In \emph{International Conference on Machine Learning (ICML)},
PMLR 48:314--323, 2016.

\bibitem{allenzhu2016}
Zeyuan Allen-Zhu and Elad Hazan.
\href{https://proceedings.mlr.press/v48/allen-zhua16.html}{Variance reduction
for faster non-convex optimization}.
In \emph{ICML}, PMLR 48:699--707, 2016.

\bibitem{natasha2017}
Zeyuan Allen-Zhu.
\href{https://proceedings.mlr.press/v70/allen-zhu17a.html}{Natasha: Faster Non-Convex Stochastic Optimization via Strongly Non-Convex Parameter}.
In \emph{ICML}, PMLR 70:89--97, 2017.

\bibitem{sarah2017}
Lam M. Nguyen, Jie Liu, Katya Scheinberg, and Martin Tak\'{a}\v{c}.
\href{https://arxiv.org/abs/1703.00102}{SARAH: A novel method for machine
learning problems using stochastic recursive gradient}.
In \emph{ICML}, 2017.

\bibitem{isarah2018}
Lam M. Nguyen, Katya Scheinberg, and Martin Takac.
\href{https://arxiv.org/abs/1811.10105}{Inexact SARAH Algorithm for Stochastic Optimization}.
\emph{arXiv preprint arXiv:1811.10105}, 2018.

\bibitem{scsg2017}
Lihua Lei, Cheng Ju, Jianbo Chen, and Michael I. Jordan.
\href{https://arxiv.org/abs/1706.09156}{Non-convex finite-sum optimization
via SCSG methods}.
In \emph{NeurIPS}, volume 30, 2017.

\bibitem{spider2018}
Cong Fang, Chris Junchi Li, Zhouchen Lin, and Tong Zhang.
\href{https://arxiv.org/abs/1807.01695}{SPIDER: Near-optimal non-convex
optimization via stochastic path-integrated differential estimator}.
In \emph{NeurIPS}, volume 31, 2018.

\bibitem{snvrg2018}
Dongruo Zhou, Pan Xu, and Quanquan Gu.
\href{https://arxiv.org/abs/1806.07811}{Stochastic nested variance reduction
for nonconvex optimization}.
In \emph{NeurIPS}, volume 31, 2018.

\bibitem{spiderboost2019}
Zhe Wang, Kaiyi Ji, Yi Zhou, Yingbin Liang, and Vahid Tarokh.
\href{https://arxiv.org/abs/1810.10690}{SpiderBoost and momentum: Faster
stochastic variance reduction algorithms}.
In \emph{NeurIPS}, volume 32, 2019.

\bibitem{proxsarah2020}
Nhan H. Pham, Lam M. Nguyen, Dzung T. Phan, and Quoc Tran-Dinh.
\href{https://www.jmlr.org/papers/v21/19-248.html}{ProxSARAH: An Efficient Algorithmic Framework for Stochastic Composite Nonconvex Optimization}.
\emph{Journal of Machine Learning Research}, 21(110):1--48, 2020.

\bibitem{zerosarah2021}
Zhize Li, Slavomir Hanzely, and Peter Richtarik.
\href{https://arxiv.org/abs/2103.01447}{ZeroSARAH: Efficient Nonconvex Finite-Sum Optimization with Zero Full Gradient Computation}.
\emph{arXiv preprint arXiv:2103.01447}, 2021.

\bibitem{page2021}
Zhize Li, Hongyan Bao, Xiangliang Zhang, and Peter Richt\'{a}rik.
\href{https://proceedings.mlr.press/v139/li21a.html}{PAGE: A simple and
optimal probabilistic gradient estimator for nonconvex optimization}.
In \emph{ICML}, 2021.

\bibitem{nonconvexsampling2019}
Samuel Horvath and Peter Richtarik.
\href{https://proceedings.mlr.press/v97/horvath19a.html}{Nonconvex Variance Reduced Optimization with Arbitrary Sampling}.
In \emph{ICML}, PMLR 97:2781--2789, 2019.

\bibitem{storm2019}
Ashok Cutkosky and Francesco Orabona.
\href{https://arxiv.org/abs/1905.10018}{Momentum-based variance reduction
in non-convex SGD}.
In \emph{NeurIPS}, volume 32, 2019.

\bibitem{hybrid2019}
Quoc Tran-Dinh, Nhan H. Pham, Dzung T. Phan, and Lam M. Nguyen.
\href{https://arxiv.org/abs/1907.03793}{A hybrid stochastic optimization
framework for stochastic composite nonconvex optimization}.
\emph{arXiv preprint arXiv:1907.03793}, 2019.

\bibitem{stormplus2021}
Kfir Y. Levy, Ali Kavis, and Volkan Cevher.
\href{https://proceedings.neurips.cc/paper/2021/hash/ac10ff1941c540cd87c107330996f4f6-Abstract.html}{STORM+:
Fully adaptive SGD with recursive momentum for nonconvex optimization}.
In \emph{NeurIPS}, volume 34, 2021.

\bibitem{superadam2021}
Feihu Huang, Junyi Li, and Heng Huang.
\href{https://proceedings.neurips.cc/paper/2021/hash/4be5a36cbaca8ab9d2066debfe4e65c1-Abstract.html}{SUPER-ADAM: Faster and Universal Framework of Adaptive Gradients}.
In \emph{NeurIPS}, volume 34, 2021.

\bibitem{metastorm2022}
Zijian Liu, Ta Duy Nguyen, Thien Hang Nguyen, Alina Ene, and Huy L. Nguyen.
\href{https://arxiv.org/abs/2209.14853}{META-STORM: Generalized fully-adaptive
variance reduced SGD for unbounded functions}.
\emph{arXiv preprint arXiv:2209.14853}, 2022.

\bibitem{adaspider2022}
Ali Kavis, Stratis Skoulakis, Kimon Antonakopoulos, Leello Tadesse Dadi, and Volkan Cevher.
\href{https://proceedings.neurips.cc/paper_files/paper/2022/hash/94f625dcdec313cd432d65f96fcc51c8-Abstract-Conference.html}{Adaptive Stochastic Variance Reduction for Non-convex Finite-Sum Minimization}.
In \emph{NeurIPS}, volume 35, 2022.

\bibitem{jiang_adaptive2024}
Wei Jiang, Sifan Yang, Yibo Wang, and Lijun Zhang.
\href{https://proceedings.neurips.cc/paper_files/paper/2024/hash/272efd3a6091ceefcbc79f1f3a6fdba4-Abstract-Conference.html}{Adaptive
variance reduction for stochastic optimization under weaker assumptions}.
In \emph{NeurIPS}, volume 37, pages 22047--22080, 2024.

\bibitem{adasvrg2021}
Benjamin Dubois-Taine, Sharan Vaswani, Reza Babanezhad, Mark Schmidt, and Simon Lacoste-Julien.
\href{https://arxiv.org/abs/2102.09645}{SVRG Meets AdaGrad: Painless Variance Reduction}.
\emph{arXiv preprint arXiv:2102.09645}, 2021.

\bibitem{almosttune2020}
Bingcong Li, Lingda Wang, and Georgios B. Giannakis.
\href{https://proceedings.mlr.press/v119/li20n.html}{Almost Tune-Free Variance Reduction}.
In \emph{ICML}, PMLR 119:5969--5978, 2020.

\bibitem{adaptiveextra2022}
Zijian Liu, Ta Duy Nguyen, Alina Ene, and Huy L. Nguyen.
\href{https://proceedings.mlr.press/v162/liu22o.html}{Adaptive Accelerated (Extra-)Gradient Methods with Variance Reduction}.
In \emph{ICML}, PMLR 162:13947--13994, 2022.

\bibitem{alfpg2026}
Ganzhao Yuan.
\href{https://arxiv.org/abs/2502.21099v4}{Adaptive Lipschitz-Free Proximal Gradient Methods for Stochastic Composite Nonconvex Optimization}.
\emph{arXiv preprint arXiv:2502.21099v4}, 2026.

\bibitem{ssrgd2019}
Zhize Li.
\href{https://papers.nips.cc/paper_files/paper/2019/hash/addfa9b7e234254d26e9c7f2af1005cb-Abstract.html}{SSRGD: Simple Stochastic Recursive Gradient Descent for Escaping Saddle Points}.
In \emph{NeurIPS}, volume 32, 2019.

\bibitem{natasha22018}
Zeyuan Allen-Zhu.
\href{https://proceedings.neurips.cc/paper/2018/hash/79a49b3e3762632813f9e35f4ba53d6c-Abstract.html}{Natasha 2: Faster Non-Convex Optimization Than SGD}.
In \emph{NeurIPS}, volume 31, 2018.

\bibitem{svrc2019}
Zhe Wang, Yi Zhou, Yingbin Liang, and Guanghui Lan.
\href{https://proceedings.mlr.press/v89/wang19d.html}{Stochastic Variance-Reduced Cubic Regularization for Nonconvex Optimization}.
In \emph{AISTATS}, PMLR 89:2731--2740, 2019.

\bibitem{svrcnewton2018}
Dongruo Zhou, Pan Xu, and Quanquan Gu.
\href{https://proceedings.mlr.press/v80/zhou18d.html}{Stochastic Variance-Reduced Cubic Regularized Newton Methods}.
In \emph{ICML}, PMLR 80:5990--5999, 2018.

\bibitem{lower2023}
Yossi Arjevani, Yair Carmon, John C. Duchi, Dylan J. Foster,
Nathan Srebro, and Blake Woodworth.
\href{https://doi.org/10.1007/s10107-022-01822-7}{Lower bounds for non-convex
stochastic optimization}.
\emph{Mathematical Programming}, 199:165--214, 2023.

\bibitem{scgd2014}
Mengdi Wang, Ethan X. Fang, and Han Liu.
\href{https://arxiv.org/abs/1411.3803}{Stochastic compositional gradient
descent: Algorithms for minimizing compositions of expected-value functions}.
\emph{arXiv preprint arXiv:1411.3803}, 2014.

\bibitem{asc2017}
Mengdi Wang, Ji Liu, and Ethan X. Fang.
\href{https://jmlr.org/papers/v18/16-504.html}{Accelerating stochastic
composition optimization}.
\emph{Journal of Machine Learning Research}, 18(105):1--23, 2017.

\bibitem{nasa2020}
Saeed Ghadimi, Andrzej Ruszczynski, and Mengdi Wang.
\href{https://doi.org/10.1137/18M1230542}{A Single Timescale Stochastic Approximation Method for Nested Stochastic Optimization}.
\emph{SIAM Journal on Optimization}, 2020.

\bibitem{nearlyeasy2020}
Tianyi Chen, Yuejiao Sun, and Wotao Yin.
\href{https://arxiv.org/abs/2008.10847}{Solving Stochastic Compositional Optimization is Nearly as Easy as Solving Stochastic Optimization}.
\emph{arXiv preprint arXiv:2008.10847}, 2020.

\bibitem{lian2017}
Xiangru Lian, Mengdi Wang, and Ji Liu.
\href{https://proceedings.mlr.press/v54/lian17a.html}{Finite-sum composition
optimization via variance reduced gradient descent}.
In \emph{International Conference on Artificial Intelligence and Statistics
(AISTATS)}, PMLR 54:1159--1167, 2017.

\bibitem{compavg2018}
Tsung-Yu Hsieh, Yasser El-Manzalawy, Yiwei Sun, and Vasant Honavar.
\href{https://arxiv.org/abs/1809.01225}{Compositional Stochastic Average Gradient for Machine Learning and Related Applications}.
\emph{arXiv preprint arXiv:1809.01225}, 2018.

\bibitem{civr2019}
Junyu Zhang and Lin Xiao.
\href{https://proceedings.mlr.press/v97/zhang19n.html}{A composite randomized
incremental gradient method}.
In \emph{ICML}, PMLR 97:7454--7462, 2019.

\bibitem{sarahcomp2019}
Wenqing Hu, Chris Junchi Li, Xiangru Lian, Ji Liu, and Huizhuo Yuan.
\href{https://papers.neurips.cc/paper_files/paper/2019/hash/21ce689121e39821d07d04faab328370-Abstract.html}{Efficient Smooth Non-Convex Stochastic Compositional Optimization via Stochastic Recursive Gradient Descent}.
In \emph{NeurIPS}, volume 32, 2019.

\bibitem{nested2019}
Junyu Zhang and Lin Xiao.
\href{https://arxiv.org/abs/1908.11468}{Multi-level composite stochastic
optimization via nested variance reduction}.
\emph{arXiv preprint arXiv:1908.11468}, 2019.

\bibitem{jiang_multilevel2022}
Wei Jiang, Bokun Wang, Yibo Wang, Lijun Zhang, and Tianbao Yang.
\href{https://proceedings.mlr.press/v162/jiang22c.html}{Optimal algorithms
for stochastic multi-level compositional optimization}.
In \emph{ICML}, PMLR 162:10195--10216, 2022.

\bibitem{levelindependent2020}
Krishnakumar Balasubramanian, Saeed Ghadimi, and Anthony Nguyen.
\href{https://arxiv.org/abs/2008.10526}{Stochastic Multi-level Composition Optimization Algorithms with Level-Independent Convergence Rates}.
\emph{arXiv preprint arXiv:2008.10526}, 2020.

\bibitem{jiang_revisiting2025}
Wei Jiang, Sifan Yang, Yibo Wang, Tianbao Yang, and Lijun Zhang.
\href{https://doi.org/10.1109/TPAMI.2025.3552197}{Revisiting stochastic
multi-level compositional optimization}.
\emph{IEEE Transactions on Pattern Analysis and Machine Intelligence},
47(7):5613--5624, 2025.

\bibitem{fccotheory2022}
Bokun Wang and Tianbao Yang.
\href{https://proceedings.mlr.press/v162/wang22ak.html}{Finite-Sum Coupled Compositional Stochastic Optimization: Theory and Applications}.
In \emph{ICML}, PMLR 162:23292--23317, 2022.

\bibitem{jiang_msvr2022}
Wei Jiang, Gang Li, Yibo Wang, Lijun Zhang, and Tianbao Yang.
\href{https://papers.nips.cc/paper_files/paper/2022/hash/d13ee73683fd5567e5c07634a25cd7b8-Abstract-Conference.html}{Multi-block-single-probe
variance reduced estimator for coupled compositional optimization}.
In \emph{NeurIPS}, volume 35, pages 32499--32511, 2022.

\bibitem{weakcomp2023}
Quanqi Hu, Dixian Zhu, and Tianbao Yang.
\href{https://papers.nips.cc/paper_files/paper/2023/hash/1160792eab11de2bbaf9e71fce191e8c-Abstract-Conference.html}{Non-Smooth Weakly-Convex Finite-sum Coupled Compositional Optimization}.
In \emph{NeurIPS}, volume 36, 2023.

\bibitem{jiang_fcco2026}
Wei Jiang, Sifan Yang, Yibo Wang, Lijun Zhang, and Zechao Li.
\href{https://arxiv.org/abs/2609.15723}{Solving finite-sum coupled
compositional optimization via multi-block-single-probe estimator}.
\emph{arXiv preprint arXiv:2609.15723}, 2026.

\bibitem{momentumcomp2025}
Xingyu Chen, Bokun Wang, Ming Yang, Qihang Lin, and Tianbao Yang.
\href{https://papers.nips.cc/paper_files/paper/2025/hash/714467868756e0e0614e99de02ac0032-Abstract-Conference.html}{Stochastic Momentum Methods for Non-smooth Non-Convex Finite-Sum Coupled Compositional Optimization}.
In \emph{NeurIPS}, volume 38, 2025.

\bibitem{vrfw2016}
Elad Hazan and Haipeng Luo.
\href{https://proceedings.mlr.press/v48/hazana16.html}{Variance-Reduced and Projection-Free Stochastic Optimization}.
In \emph{ICML}, PMLR 48:1263--1271, 2016.

\bibitem{projectioncomp2022}
Tesi Xiao, Krishnakumar Balasubramanian, and Saeed Ghadimi.
\href{https://proceedings.neurips.cc/paper_files/paper/2022/hash/7e16384b94a1c7e4462a70bb8fb93ca9-Abstract-Conference.html}{A Projection-free Algorithm for Constrained Stochastic Multi-level Composition Optimization}.
In \emph{NeurIPS}, volume 35, 2022.

\bibitem{jiang_projection2024}
Wei Jiang, Sifan Yang, Wenhao Yang, Yibo Wang, Yuanyu Wan, and Lijun Zhang.
\href{https://proceedings.mlr.press/v235/jiang24g.html}{Projection-free
variance reduction methods for stochastic constrained multi-level
compositional optimization}.
In \emph{ICML}, PMLR 235:21962--21987, 2024.

\bibitem{scgpp2019}
Hamed Hassani, Amin Karbasi, Aryan Mokhtari, and Zebang Shen.
\href{https://arxiv.org/abs/1902.06992}{Stochastic Conditional Gradient++}.
\emph{arXiv preprint arXiv:1902.06992}, 2019.

\bibitem{jiang_projection2026}
Wei Jiang, Sifan Yang, Wenhao Yang, Yibo Wang, Yuanyu Wan,
Zechao Li, and Lijun Zhang.
\href{https://arxiv.org/abs/2609.15679}{Projection-free multi-level
algorithms for stochastic constrained compositional optimization}.
\emph{arXiv preprint arXiv:2609.15679}, 2026.

\bibitem{jiang_models2023}
Wei Jiang, Jiayu Qin, Lingyu Wu, Changyou Chen, Tianbao Yang, and Lijun Zhang.
\href{https://proceedings.mlr.press/v202/jiang23g.html}{Learning unnormalized
statistical models via compositional optimization}.
In \emph{ICML}, PMLR 202:15105--15124, 2023.

\bibitem{fednest2022}
Davoud Ataee Tarzanagh, Mingchen Li, Christos Thrampoulidis, and Samet Oymak.
\href{https://proceedings.mlr.press/v162/tarzanagh22a.html}{FedNest: Federated Bilevel, Minimax, and Compositional Optimization}.
In \emph{ICML}, PMLR 162:21146--21179, 2022.

\bibitem{jiang_models2026}
Wei Jiang, Jiayu Qin, Lingyu Wu, Changyou Chen, Tianbao Yang, and Lijun Zhang.
\href{https://jiangw1998.github.io/pdf/TPAMI-2026-Jiang.pdf}{Optimizing
unnormalized statistical models through compositional optimization}.
\emph{IEEE Transactions on Pattern Analysis and Machine Intelligence},
48(2):1949--1960, 2026.

\bibitem{localcomp2022}
Hongchang Gao, Junyi Li, and Heng Huang.
\href{https://proceedings.mlr.press/v162/gao22c.html}{On the Convergence of Local Stochastic Compositional Gradient Descent with Momentum}.
In \emph{ICML}, PMLR 162:7017--7035, 2022.

\bibitem{qsgd2017}
Dan Alistarh, Demjan Grubic, Jerry Li, Ryota Tomioka, and Milan Vojnovic.
\href{https://papers.nips.cc/paper/2017/hash/6c340f25839e6acdc73414517203f5f0-Abstract.html}{QSGD: Communication-Efficient SGD via Gradient Quantization and Encoding}.
In \emph{NeurIPS}, volume 30, 2017.

\bibitem{terngrad2017}
Wei Wen, Cong Xu, Feng Yan, Chunpeng Wu, Yandan Wang, Yiran Chen, and Hai Li.
\href{https://arxiv.org/abs/1705.07878}{TernGrad: Ternary Gradients to Reduce Communication in Distributed Deep Learning}.
In \emph{NeurIPS}, volume 30, 2017.

\bibitem{signsgd2018}
Jeremy Bernstein, Yu-Xiang Wang, Kamyar Azizzadenesheli, and Anima Anandkumar.
\href{https://proceedings.mlr.press/v80/bernstein18a.html}{signSGD:
Compressed optimisation for non-convex problems}.
In \emph{ICML}, PMLR 80:560--569, 2018.

\bibitem{majority2019}
Jeremy Bernstein, Jiawei Zhao, Kamyar Azizzadenesheli, and Anima Anandkumar.
\href{https://arxiv.org/abs/1810.05291}{signSGD with majority vote is
communication efficient and fault tolerant}.
In \emph{International Conference on Learning Representations (ICLR)}, 2019.

\bibitem{errorfeedback2019}
Sai Praneeth Karimireddy, Quentin Rebjock, Sebastian U. Stich, and Martin Jaggi.
\href{https://proceedings.mlr.press/v97/karimireddy19a.html}{Error feedback
fixes SignSGD and other gradient compression schemes}.
In \emph{ICML}, PMLR 97:3252--3261, 2019.

\bibitem{jiang_sign2024}
Wei Jiang, Sifan Yang, Wenhao Yang, and Lijun Zhang.
\href{https://papers.nips.cc/paper_files/paper/2024/hash/3b7db05c0e383518d789b6e93131f1f0-Abstract-Conference.html}{Efficient
sign-based optimization: Accelerating convergence via variance reduction}.
In \emph{NeurIPS}, volume 37, pages 33891--33932, 2024.

\bibitem{lion2023}
Xiangning Chen, Chen Liang, Da Huang, Esteban Real, Kaiyuan Wang,
Hieu Pham, Xuanyi Dong, Thang Luong, Cho-Jui Hsieh, Yifeng Lu, and Quoc V. Le.
\href{https://proceedings.neurips.cc/paper_files/paper/2023/hash/9a39b4925e35cf447ccba8757137d84f-Abstract.html}{Symbolic
discovery of optimization algorithms}.
In \emph{NeurIPS}, volume 36, 2023.

\bibitem{jiang_lion2026}
Wei Jiang, Mao Xu, Wenhao Yang, Yibo Wang, Zechao Li, and Lijun Zhang.
\href{https://proceedings.mlr.press/v306/jiang26ax.html}{Convergence analysis
of the Lion optimizer in centralized and distributed settings}.
In \emph{ICML}, PMLR 306:53308--53333, 2026.

\bibitem{diana2019}
Konstantin Mishchenko, Eduard Gorbunov, Martin Takac, and Peter Richtarik.
\href{https://arxiv.org/abs/1901.09269}{Distributed Learning with Compressed Gradient Differences}.
\emph{arXiv preprint arXiv:1901.09269}, 2019.

\bibitem{marina2021}
Eduard Gorbunov, Konstantin P. Burlachenko, Zhize Li, and Peter Richtarik.
\href{https://proceedings.mlr.press/v139/gorbunov21a.html}{MARINA: Faster Non-Convex Distributed Learning with Compression}.
In \emph{ICML}, PMLR 139:3788--3798, 2021.

\bibitem{jiang_distributed2026}
Wei Jiang, Zechao Li, and Lijun Zhang.
\href{https://arxiv.org/abs/2609.18656}{Revisiting distributed sign-based
variance reduction}.
\emph{arXiv preprint arXiv:2609.18656}, 2026.

\bibitem{ef212021}
Peter Richtarik, Igor Sokolov, and Ilyas Fatkhullin.
\href{https://papers.nips.cc/paper_files/paper/2021/hash/231141b34c82aa95e48810a9d1b33a79-Abstract.html}{EF21: A New, Simpler, Theoretically Better, and Practically Faster Error Feedback}.
In \emph{NeurIPS}, volume 34, 2021.

\bibitem{dasha2022}
Alexander Tyurin and Peter Richtarik.
\href{https://arxiv.org/abs/2202.01268}{DASHA: Distributed Nonconvex Optimization with Communication Compression, Optimal Oracle Complexity, and No Client Synchronization}.
\emph{arXiv preprint arXiv:2202.01268}, 2022.

\bibitem{scaffold2020}
Sai Praneeth Karimireddy, Satyen Kale, Mehryar Mohri, Sashank Reddi, Sebastian Stich, and Ananda Theertha Suresh.
\href{https://proceedings.mlr.press/v119/karimireddy20a.html}{SCAFFOLD: Stochastic Controlled Averaging for Federated Learning}.
In \emph{ICML}, PMLR 119:5132--5143, 2020.

\bibitem{fedpage2021}
Haoyu Zhao, Zhize Li, and Peter Richtarik.
\href{https://arxiv.org/abs/2108.04755}{FedPAGE: A Fast Local Stochastic Gradient Method for Communication-Efficient Federated Learning}.
\emph{arXiv preprint arXiv:2108.04755}, 2021.

\bibitem{hybriddecentral2021}
Ran Xin, Usman A. Khan, and Soummya Kar.
\href{https://proceedings.mlr.press/v139/xin21a.html}{A Hybrid Variance-Reduced Method for Decentralized Stochastic Non-Convex Optimization}.
In \emph{ICML}, PMLR 139:11459--11469, 2021.

\bibitem{proxskip2022}
Konstantin Mishchenko, Grigory Malinovsky, Sebastian Stich, and Peter Richtarik.
\href{https://proceedings.mlr.press/v162/mishchenko22b.html}{ProxSkip: Yes! Local Gradient Steps Provably Lead to Communication Acceleration! Finally!}.
In \emph{ICML}, PMLR 162:15750--15769, 2022.

\bibitem{saddle2016}
P. Balamurugan and Francis Bach.
\href{https://arxiv.org/abs/1605.06398}{Stochastic variance reduction
methods for saddle-point problems}.
In \emph{NeurIPS}, volume 29, 2016.

\bibitem{matrixgames2019}
Yair Carmon, Yujia Jin, Aaron Sidford, and Kevin Tian.
\href{https://papers.nips.cc/paper_files/paper/2019/hash/6c442e0e996fa84f344a14927703a8c1-Abstract.html}{Variance Reduction for Matrix Games}.
In \emph{NeurIPS}, volume 32, 2019.

\bibitem{svre2019}
Tatjana Chavdarova, Gauthier Gidel, Francois Fleuret, and Simon Lacoste-Julien.
\href{https://papers.nips.cc/paper_files/paper/2019/hash/58a2fc6ed39fd083f55d4182bf88826d-Abstract.html}{Reducing Noise in GAN Training with Variance Reduced Extragradient}.
In \emph{NeurIPS}, volume 32, 2019.

\bibitem{catalystminimax2020}
Junchi Yang, Siqi Zhang, Negar Kiyavash, and Niao He.
\href{https://papers.nips.cc/paper_files/paper/2020/hash/3db54f5573cd617a0112d35dd1e6b1ef-Abstract.html}{A Catalyst Framework for Minimax Optimization}.
In \emph{NeurIPS}, volume 33, 2020.

\bibitem{sreda2020}
Luo Luo, Haishan Ye, Zhichao Huang, and Tong Zhang.
\href{https://papers.nips.cc/paper/2020/hash/ecb47fbb07a752413640f82a945530f8-Abstract.html}{Stochastic
recursive gradient descent ascent for stochastic nonconvex-strongly-concave
minimax problems}.
In \emph{NeurIPS}, volume 33, 2020.

\bibitem{hybridminimax2020}
Quoc Tran-Dinh, Deyi Liu, and Lam M. Nguyen.
\href{https://papers.nips.cc/paper/2020/hash/7f141cf8e7136ce8701dc6636c2a6fe4-Abstract.html}{Hybrid Variance-Reduced SGD Algorithms for Minimax Problems with Nonconvex-Linear Function}.
In \emph{NeurIPS}, volume 33, 2020.

\bibitem{adgda2023}
Feihu Huang, Xidong Wu, and Zhengmian Hu.
\href{https://proceedings.mlr.press/v206/huang23a.html}{AdaGDA: Faster
adaptive gradient descent ascent methods for minimax optimization}.
In \emph{AISTATS}, PMLR 206:2365--2389, 2023.

\bibitem{dream2024}
Lesi Chen, Haishan Ye, and Luo Luo.
\href{https://proceedings.mlr.press/v238/chen24b.html}{An Efficient Stochastic Algorithm for Decentralized Nonconvex-Strongly-Concave Minimax Optimization}.
In \emph{AISTATS}, PMLR 238:1990--1998, 2024.

\bibitem{vi2022}
Ahmet Alacaoglu and Yura Malitsky.
\href{https://proceedings.mlr.press/v178/alacaoglu22a.html}{Stochastic
variance reduction for variational inequality methods}.
In \emph{Conference on Learning Theory (COLT)}, PMLR 178:778--816, 2022.

\end{thebibliography}
\end{document}